\newtheorem{theorem}{Theorem}
\newtheorem{corollary}[theorem]{Corollary}
\newtheorem{remark}[theorem]{Remark}
\newenvironment{proof}{{\sc Proof:}}{
~\hfill\rule{2mm}{3mm}\vspace{.1in}}
\begin{document}

\title{Hopf Bifurcations in Fast/Slow Systems with\\ Rate-Dependent Tipping }

\author{Jonathan Hahn \thanks{School of Mathematics, University of Minnesota, Minneapolis, MN 55455, USA (hahnx240@umn.edu) \newline April 17, 2017}}

\date{}

\maketitle
\begin{abstract}
We analyze rate-dependent tipping in a fast/slow system with an equilibrium near the fold of a critical manifiold. We find a Hopf bifurcation as the rate parameter increases in the reduced co-moving system. This implies the growth of a limit cycle as the system changes from tracking a quasi-static equilibrium to tipping. Rather than trajectories diverging at a critical rate, they continue to track the quasi-static equilibrium in a spiral corresponding to an emerging limit cycle at the Hopf bifurcation. We apply the same analysis to a forced van der Pol oscillator to show this phenomenon in a familiar system where the growth of this limit cycle is well understood. 
\end{abstract}

\section{Introduction}
Simple examples have often motivated the understanding of phenomena that may arise in more complex or general settings. In this paper, we analyze a simple fast/slow system that appeared in a paper by Ashwin et. al. \cite{Ashwin2012} which investigated representative examples for a general theory of rate-dependent tipping. Ashwin et. al. used geometric singular perturbation (GSP) theory to determine a critical rate of tipping as a small parameter, $\epsilon$, approached $0$ in the fast/slow system. GSP is a powerful and often-used technique for analyzing a fast/slow system; however, we wish to demonstrate that it may not give us a full understanding of the system for $\epsilon > 0$ in this case.

We seek to provide a more complete picture of the rate-induced tipping in this system. Instead of using GSP-style analysis, we reduce to a two-dimensional autonomous system and use simple bifurcation analysis to examine it for $\epsilon > 0$. The system in question is not complicated and does not require any ground-breaking techniques to analyze. Yet, it is important to analyze such a simple system because it demonstrates a result which may generalize to more complicated fast/slow systems.

The example concerns a steadily moving {\it quasi-static equilibrium} (QSE) near the fold of a folded critical manifold in a 2-dimensional fast/slow system. The GSP analysis of Ashwin et. al. provides a critical rate of tipping for $\epsilon \to 0$, which gives a good approximation for determining when trajectories slip over the fold, tipping away from the equilibrium. However, this analysis gives the impression that there exists a bifurcation in this system where, at the critical rate, a globally stable equilibrium becomes an repelling equilibrium and all trajectories head toward infinity, or at least, far away from the QSE. As the system is non-autonomous, to our knowledge there is no existing theory to dispute this possibility. However, we can reduce the system into an autonomous one, where the rate parameter is a standard bifurcation parameter. Index theory of attractors and repellers rules out the possibility for a single global attractor to disappear in this way.

We will show that the reduced system actually undergoes a Hopf bifurcation at the critical rate, and an asymptotically stable limit cycle is born from the destabilizing equilibrium. This limit cycle grows continuously, meaning that trajectories do not simply bifurcate from tracking an equilibrium to approaching infinity. The distance of the limit cycle from the equilibrium grows continuously with the rate parameter. The limit cycle grows quickly, which gives the impression of a tipping event, but it is still continuous rather than immediate. In the original system, the bifurcation results in trajectories which spiral around the quasi-static equilibrium. As the rate parameter increases further, the limit cycle in the reduced system increases in size, and in the original system the orbits will spiral with increasing amplitude. 

The behavior of this system is actually an instance of a canard phenomenon, whose prototypical example is the van der Pol oscillator. In early days it was thought that the van der Pol system changed from having a stable equilibrium to a large limit cycle immediately upon bifurcation because that is what happened in numerical simulations. It is now known that the system actually goes through a transition where canard cycles increase to the large limit cycle during a very small range of the bifurcation parameter. The same sort of canard phenomenon occurs in the fast/slow system that Ashwin et. al. analyze.

In this paper we mainly seek to demonstrate that in the case of rate induced tipping with the with steady drift of an equilibrium near a folded critical manifold, there may be a continuous transition from trajectories that stay on the stable side of the fold to trajectories that pass over the fold and spiral around it. This continuous transition makes it more ambiguous or arbitrary to define a tipping threshold, since there is no immediate jump in the distance of trajectories from the moving equilibrium. The steady drift of the system allows for easy analysis, but we believe the same behavior will arise even in fast/slow sytems that we cannot simplify such ease. We also present this as an example displaying the need of an index theory for attractors and repellers in non-autonomous systems.

\section{Rate-Induced Bifurcations}
Bifurcations are conventionally viewed as changes in the stability of an equilibrium or limit cycle as a parameter varies. Rate-dependent or rate-induced bifurcations are distinct in that the stability of a system does not change with the variation of a parameter; rather, a parameter varies too quickly for the state to follow the movement of an equilibrium point. In many examples of rate-dependent tipping, there exists a critical rate defining a threshold for the rate of the parameter. Below the critical rate, the state is able track the equilibrium, and past the critical rate the system tips and fails to track it. 

Consider a system with state vector $x \in \mathbb{R}^n$, with parameters $\mu \in \mathbb{R}^k$ that do not vary over time, and a time-dependent external forcing function $\lambda(rt) \in \mathbb{R}^l$:
\[\frac{dx}{dt} = f(x, \mu, \lambda(rt))\]
Here $r$ is the rate of the forcing function. We will assume that for every fixed value of $\lambda$ in some domain, the system has a stable equilibrium, $\tilde{x}$. We will also assume that the stable state depends continuously on $\lambda$. So the equilibrium of the system can be written as $\tilde{x}(\lambda)$. For small values of $r$, the state will track the moving equilibrium, $\tilde{x}(\lambda(rt))$, called the {\em quasi-static equilibrium} (QSE). In some cases, for $r$ large enough, the quasi-static equilbrium moves too quickly for the system to follow. In this situation, we say there is a rate-induced bifurcation. 

Authors like Ashwin et. al. and Perryman \cite{Ashwin2012, Ashwin2015, Perryman2015} use various criteria to determine when a rate-induced tipping point has occured. Some cases they examine with a {\em tipping radius} $R$, in which the system tips if the state reaches a distance of $R$ from the QSE. In other cases, there may be a topological change in the system which causes the state to move away from the QSE, or to the basin of attraction of another equilibrium, as in \cite{Scheffer2008}. Sometimes, as in \cite{Wieczorek2011}, tipping occurs when an excursion happens due to the formation of a canard trajectory. The criterion for tipping can be arbitrary in some cases, then, and can depend on the physical meaning of the system rather than topological properties of the system.

We will look at two fast-slow systems which have a rate-induced bifurcation phenomenon. First we examine the system that appeared in \cite{Ashwin2012}. Next we analyze a system similar to the forced van der Pol oscillator in \cite{Guckenheimer2003} where the same rate-tipping phenomenon occurs. We examine both of these because they have a steadily drifting stable equilibrium near a fold, and the canard explosion in the van der Pol system is well-established. Weiczorek et. al. \cite{Wieczorek2011} also examine fast/slow systems with a stable equilibrium near the fold of a critical manifold. However, Weiczorek et. al. study behavior for a bi-asymptotic (logistic) forcing function, while our analysis in this paper is concerned with constant linear forcing.

\section{Fast/Slow System with Rate-Dependent Tipping}

\begin{figure}
\begin{center}
\includegraphics[width=.45\textwidth]{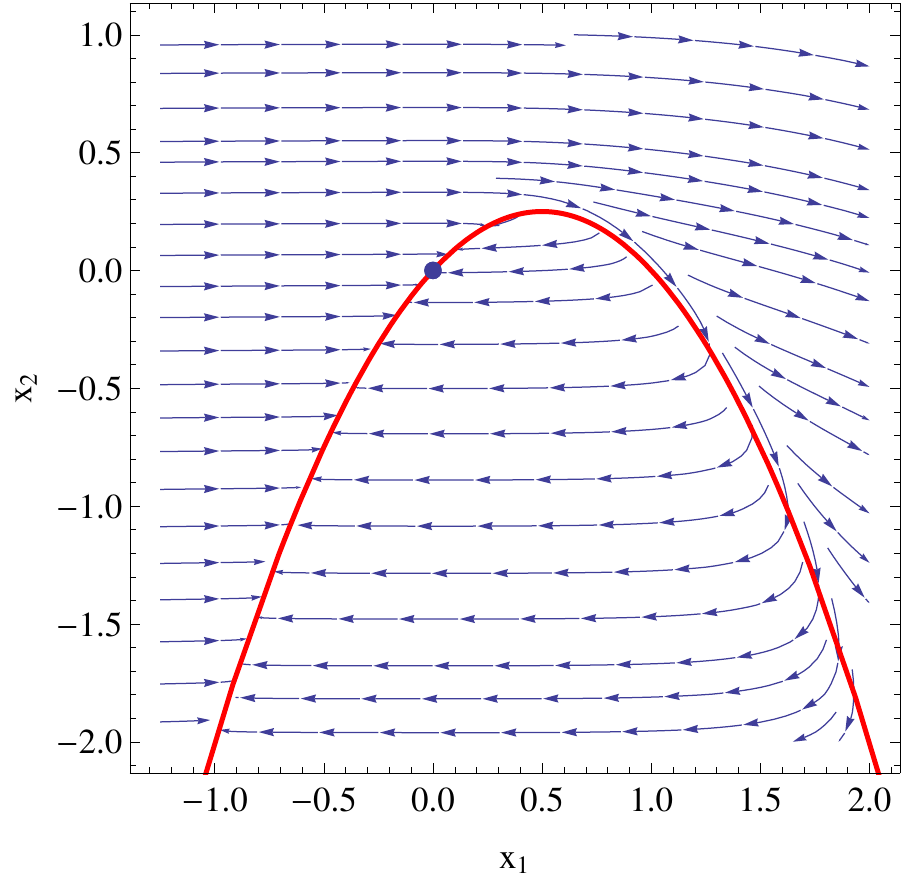}
\caption{Phase portrait of the fast-slow system in equation (\ref{fastSlowEquation1}) with a fixed $\lambda = 0$ and $\epsilon = .01$. The slow manifold is shown in red, and the dot shows the equilibrium point $(0, 0)$.}
\label{fig0}
\end{center}
\end{figure}

This section concerns a two-dimensional fast-slow system with a linear forcing function $\lambda(t) = rt$.
Ashwin et. al. analyze this system in the limit as the fast parameter $\epsilon \to 0$. Their analysis finds a critical tipping point at $r_c = \sum_{n=1}^{N}(1/2)^n$. Beyond the critical value, we find that the state begins to spiral around the quasi-static equilibrium.

The fast-slow system in \cite{Ashwin2012} (equations 3.12 - 3.14), with state variables $(x_1, x_2)$ and forcing parameter $\lambda$ increasing with rate $r$ is given by:

\begin{eqnarray}
\epsilon \frac{dx_1}{dt} &=& x_2 + \lambda + x_1(x_1 - 1) \label{fastSlowEquation1}\\
\frac{dx_2}{dt} &=& -\sum_{n=1}^{N} x_1^n\\
\frac{d\lambda}{dt} &=& r > 0
\end{eqnarray}

\noindent In this example $N$ is odd and $N \geq 5$, which ensures a globally stable equilibrium at $(0, -\lambda)$ with a fold at $(\frac{1}{2}, -\lambda + \frac{1}{4})$ for fixed $\lambda$. A phase portait of the system is shown in figure \ref{fig0}. Three numerically computed trajectories with different behaviors are depicted in figure \ref{fig1}. 

\subsection{Singular Perturbation Analysis}
We will go through a brief summary of the analysis in Ashwin et. al. \cite{Ashwin2012} to find the critical rate for $\epsilon \to 0$. Fixing $\lambda$, and setting $\epsilon = 0$ in the equation for $\frac{dx_1}{dt}$ gives the critical slow manifold as the set of points satisfying $0 = x_2 + \lambda + x_1(x_1 - 1)$, which is a folded manifold with a fold point at $(x_1, x_2) = (\frac{1}{2}, -\lambda + \frac{1}{4})$. This manifold has an attracting part where $x_1 < \frac{1}{2}$ and a repelling part where $x_1 > \frac{1}{2}$. The slow dynamics on the critical manifold can be approximated by differentiating this expression with respect to $t$:
\begin{eqnarray}
0 &= \frac{dx_2}{dt} + \frac{d\lambda}{dt} + \frac{dx_1}{dt}(2x_1 - 1)\label{reduced system 1}
\end{eqnarray}


\begin{figure}
\begin{center}
\includegraphics[width=.44\textwidth]{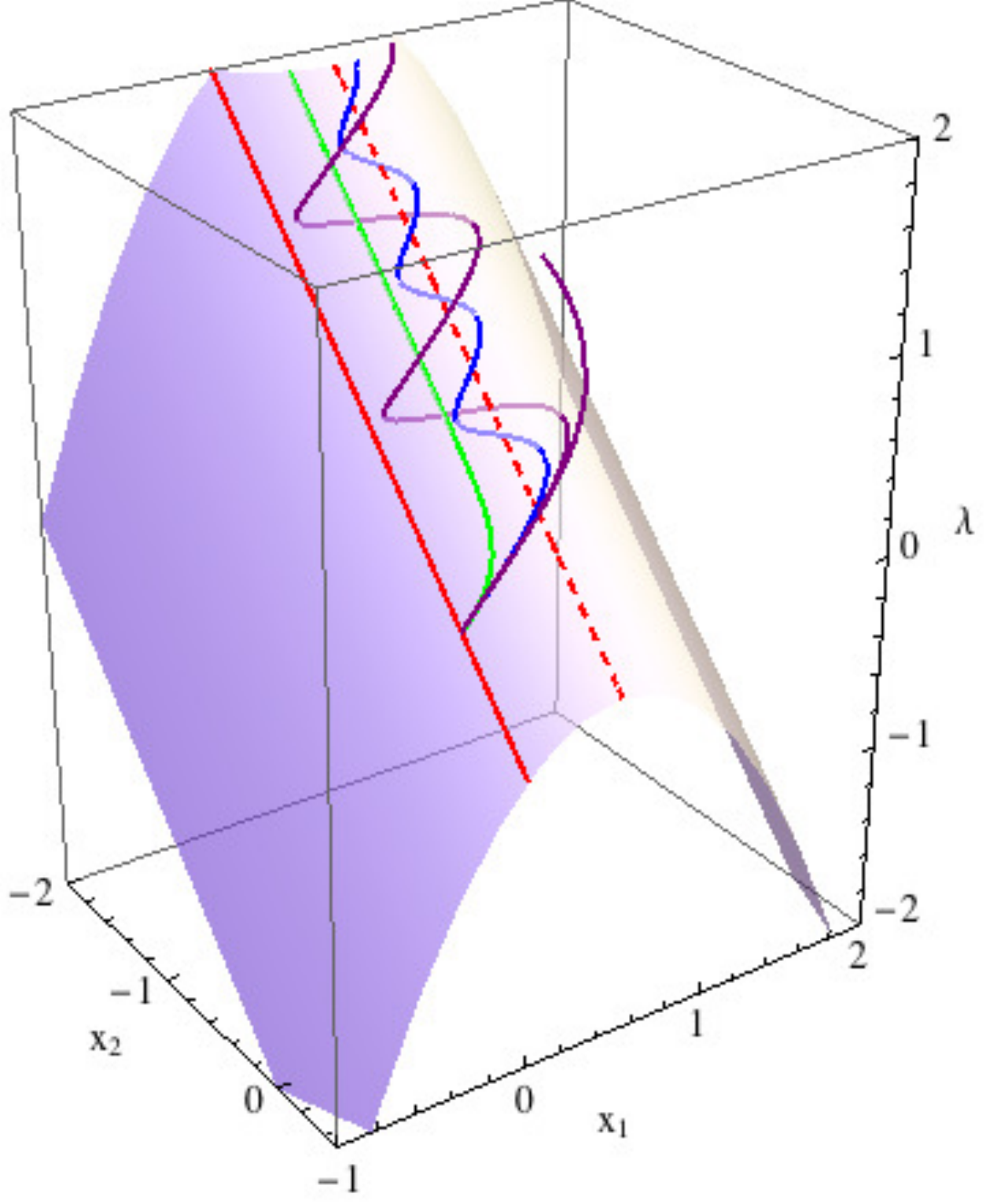}
\caption{The critical manifold as a two-dimensional manifold in $(x_1, x_2, \lambda)$. The QSE $(0,-\lambda,\lambda)$ is plotted as a solid red line, and the fold of the critical manifold at $(1/2,-\lambda+1/4,\lambda)$ is a dashed red line. The fast-slow system with trajectories shown for with various rates used. The parabolic critical manifold shown in the coordinates $(x_1, x_2, \lambda)$. The solid line shows the moving QSE, the dashed line shows the fold of the critical manifold. One trajectory has a rate $r < \sum_{n=1}^{N}(1/2)^n$, and follows the QSE at a steady distance. The other three trajectories have $r > \sum_{n=1}^{N}(1/2)^n$ and cycle around the fold with varying amplitudes. }
\label{fig1}
\end{center}
\end{figure}

\noindent Solving for $\frac{dx_1}{dt}$, the resulting dynamics of $x_1$ on the critical manifold is governed by the equation
\begin{eqnarray}
\frac{dx_1}{dt} &=  (\sum_{n=1}^{N} x_1^n - r)(2x_1 - 1)^{-1}\label{reduced system 2}
\end{eqnarray}

\noindent The system is singular for $x = \frac{1}{2}$, but rescaling time with $\frac{dt}{d\tau} = -(2x_1-1)$ we have the desingularized equation for $x_1$, with time reversed for $x > \frac{1}{2}$:
\begin{eqnarray}
\frac{dx_1}{d\tau} &=  r - \sum_{n=1}^{N} x_1^n\label{reduced system 3}
\end{eqnarray}

There is an equilibrium at $r = \sum_{n=1}^{N} x_1^n$, so for $r < \sum_{n=1}^{N} (1/2)^n$, all trajectories starting on the attracting part of the manifold ($x_1 < \frac{1}{2}$) will converge to $x_1^*$ satisfying $r = \sum_{n=1}^{N} (x_1^*)^n$. For $r > \sum_{n=1}^{N} (1/2)^n$, trajectories starting on the attracting part of the manifold will move toward the fold point at $x_1=\frac{1}{2}$ and slip over it, moving away from the QSE where $x_1=0$. 

Ashwin et. al. treat this as the tipping event with a critical rate at $r_c = \sum_{n=1}^{N} (1/2)^n$, where trajectories slip over the fold. But this doesn't quite represent the full story of this system. Numerical simulations show that for $r$ slightly greater than $r_c$, trajectories will stay close to the fold point, cycling around the fold, instead of simply diverging away from the fold and the QSE (see figure \ref{fig1}). It may be reasonable to consider this to be a form of tracking, with the state spiraling near the QSE. 

\subsection{Hopf Bifurcation in a Co-Moving System}

To explain why the state begins tracking in a spiral, we will first reduce the system to an autonomous 2-dimensional system with a change of variables. Ashwin et. al. call this a ``co-moving" system, as the new variable increases along with the variable $x_2$ and the rate parameter $\lambda$. In this system, we can find a Hopf bifurcation with an emerging periodic orbit that corresponds to the spiraling trajectories. We set $w = x_2 + \lambda$ and reduce the system to one that is autonomous:

\begin{eqnarray}
\epsilon \frac{dx_1}{dt} &= w + x_1(x_1 - 1) \label{comoving1}\\
\frac{dw}{dt} &= -\sum_{n=1}^{N} x_1^n + r \label{comoving2}
\end{eqnarray}

\begin{theorem}
The co-moving system $(\ref{comoving1}, \ref{comoving2})$ has a Hopf bifurcation at $r = \sum_{n=1}^{N}(1/2)^n$.
\end{theorem}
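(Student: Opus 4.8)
The plan is to verify the hypotheses of the planar Andronov--Hopf bifurcation theorem for the system $(\ref{comoving1}, \ref{comoving2})$ with $r$ as the bifurcation parameter. Writing it as $\dot x_1 = \tfrac{1}{\epsilon}\bigl(w + x_1(x_1-1)\bigr)$, $\dot w = r - \sum_{n=1}^N x_1^n$ and setting $g(x) := \sum_{n=1}^N x^n$, I would first locate the equilibria: the right-hand sides vanish exactly when $g(x_1) = r$ and $w = x_1 - x_1^2$. Since $g'(x) = \sum_{n=1}^N n x^{n-1} > 0$ for all $x \ge 0$ (in particular near $x = \tfrac12$), the implicit function theorem produces a smooth branch $x_1 = x_1^*(r)$ of equilibria for $r$ near $r_c := \sum_{n=1}^N (1/2)^n$, with $x_1^*(r_c) = \tfrac12$ and $\tfrac{d}{dr} x_1^*(r) = 1/g'\bigl(x_1^*(r)\bigr) > 0$.

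Next I would examine the Jacobian at this equilibrium,
\[
J(r) = \begin{pmatrix} \tfrac{1}{\epsilon}\bigl(2x_1^*(r)-1\bigr) & \tfrac{1}{\epsilon} \\ -g'\bigl(x_1^*(r)\bigr) & 0 \end{pmatrix},
\]
with trace $T(r) = \tfrac{1}{\epsilon}\bigl(2x_1^*(r)-1\bigr)$ and determinant $D(r) = \tfrac{1}{\epsilon} g'\bigl(x_1^*(r)\bigr) > 0$. The trace vanishes precisely when $x_1^*(r) = \tfrac12$, i.e.\ when $g(1/2) = r$, i.e.\ at $r = r_c$; and there $D(r_c) = \tfrac{1}{\epsilon}\sum_{n=1}^N n(1/2)^{n-1} > 0$, so the eigenvalues of $J(r_c)$ are $\pm i\sqrt{D(r_c)}$, a genuine nonzero purely imaginary pair (and, being in the plane, the only eigenvalues on the imaginary axis). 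Hence for $r$ near $r_c$ the spectrum is a complex-conjugate pair $\alpha(r) \pm i\beta(r)$ with $\alpha(r_c) = 0$ and $\beta(r_c) = \sqrt{D(r_c)} \neq 0$.

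For the eigenvalue-crossing (transversality) condition, note $\alpha(r) = \tfrac12 T(r) = \tfrac{1}{2\epsilon}\bigl(2x_1^*(r)-1\bigr)$, so $\alpha'(r_c) = \tfrac{1}{\epsilon}\,\tfrac{d}{dr}x_1^*(r_c) = 1/\bigl(\epsilon\, g'(1/2)\bigr) > 0$: the eigenvalues cross into the right half-plane with nonzero speed as $r$ increases through $r_c$. This verifies the hypotheses of the Hopf bifurcation theorem and establishes the bifurcation at $r = r_c$, consistent with the destabilization of the equilibrium and the onset of spiraling for $r > r_c$ seen numerically.

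To make precise the assertion in the introduction that the emerging cycle is asymptotically stable, one would additionally compute the first Lyapunov coefficient at $(x_1, w) = (\tfrac12, \tfrac14)$, $r = r_c$ — transforming $J(r_c)$ to the standard rotation normal form and applying the usual cubic formula — and check that it is negative (supercriticality). I expect this sign computation, which brings in the higher derivatives $g''(1/2)$ and $g'''(1/2)$ and hence depends on $N$, to be the only genuinely laborious step; everything else follows immediately from the triangular-looking structure of $J$ and the monotonicity of $g$ near $x = \tfrac12$.
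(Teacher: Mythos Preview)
Your proposal is correct and follows essentially the same route as the paper's own proof: locate the equilibrium branch, write down the Jacobian, and observe that at $r = r_c$ (equivalently $x_1^* = \tfrac12$) the eigenvalues are the purely imaginary pair $\pm i\sqrt{\tfrac{1}{\epsilon}\sum_{n=1}^N n(1/2)^{n-1}}$. If anything you are more careful than the paper, which stops at ``the eigenvalues cross the imaginary axis'' without separately computing $\alpha'(r_c)$ or addressing the first Lyapunov coefficient.
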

\begin{proof}
\noindent The equilibrium $(x_1^*, w^*)$ for (\ref{comoving1}, \ref{comoving2}) is given by the solution to 
\begin{eqnarray}
\sum_{n=1}^{N} x_1^{*n} &=& r\\
w^* &=& - x_1^{*2} + x_1^*
\end{eqnarray}
\noindent The Jacobian at this equilibrium is:
\begin{equation}\left(\begin{array}{lr}(2x_1^* - 1)/\epsilon & 1/{\epsilon}\\
\displaystyle{\sum_{n=1}^{N}  - n {(x^*_1)}^{n-1}} & 0
\end{array}\right)\end{equation}
and the eigenvalues of the Jacobian are
\begin{equation}
\displaystyle{\frac{2x^*_1 -1 \pm \sqrt{(1 - 2x^*_1)^2 - 4\epsilon\sum_{n=1}^N n {(x^*_1)}^{n-1}}}{2\epsilon} }.
\end{equation}

\begin{figure}
\begin{center}
\includegraphics[width=.45\textwidth]{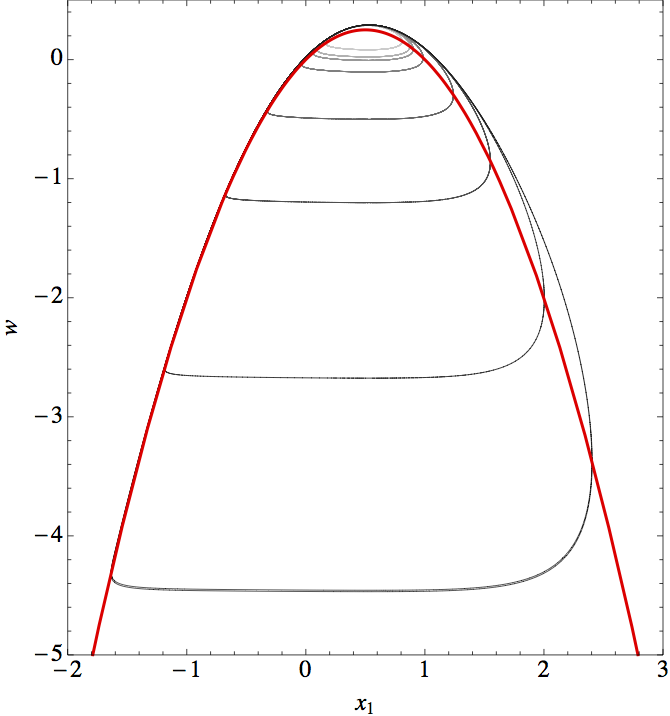} ~~~~~ \includegraphics[width=.45\textwidth]{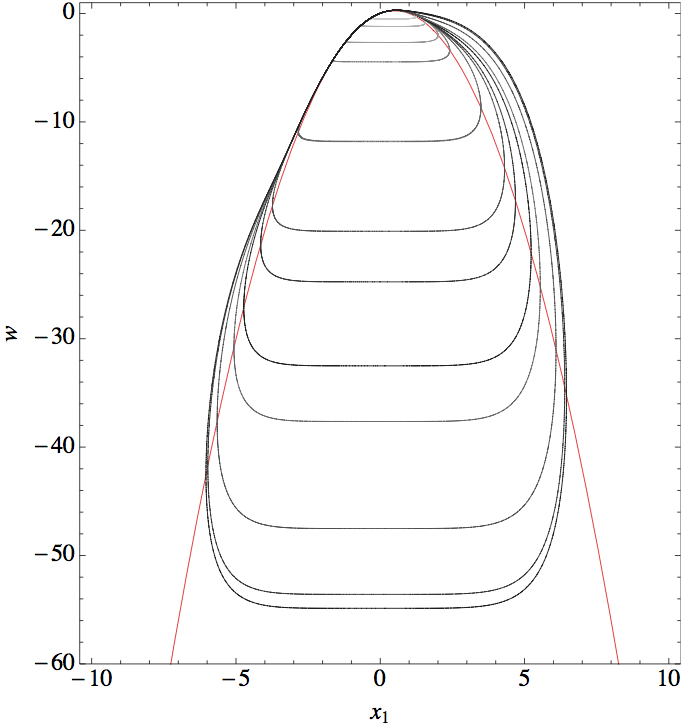}
\caption{The parabolic critical manifold (red) and several limit cycles in the co-moving system $(x_1, w)$ where $\epsilon = .02$ and $N = 5$, for several values of $r > \sum_{n=1}^{N}(1/2)^n = 0.9675$. The left shows the progression of limit cycles for $r \in [.9675, 1.092]$, and the right shows the continuous progression to large limit cycles as $r$ increases further, for $r \in [1.092, 1.3]$.}
\label{fig01}
\end{center}
\end{figure}

When $x_1^* < 1/2$, the equilibrium is asymptotically stable, which is in agreement with the conclusion in \cite{Ashwin2012} that the system does not tip for $r < \sum_{n=1}^{N}(1/2)^n = r_c$. As $r$ increases, so does $x_1^*$, so when $r = r_c$, and $x_1^* = 1/2$ the pair of eigenvalues are 

\begin{equation}
\displaystyle{\pm i\sqrt{\sum_{n=1}^N n (1/2)^{n-1}/\epsilon}}.
\end{equation}
Thus, the eigenvalues cross the imaginary axis and a Hopf bifurcation occurs. 
\end{proof}

\begin{corollary}
There is a positive neighborhood of the critical rate (i.e. $r \in (r_c, r_c + \delta)$ for some $\delta > 0$), for which trajectories follow spiraling curves $(x_1^*(t), w^*(t) - rt, rt)$, where $(x_1^*(t), w^*(t))$ is the asympototically stable limit cycle in the co-moving system $(\ref{comoving1}, \ref{comoving2})$.
\end{corollary}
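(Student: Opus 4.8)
The plan is to upgrade the preceding theorem---that the co-moving system has a Hopf bifurcation at $r_c = \sum_{n=1}^N (1/2)^n$---to the statement that this bifurcation is \emph{supercritical}, so that an orbitally asymptotically stable limit cycle is born in $(\ref{comoving1},\ref{comoving2})$ as $r$ passes through $r_c$, and then to carry that cycle back to the full non-autonomous system $(\ref{fastSlowEquation1})$--$(3)$ through the substitution $w = x_2 + \lambda$, $\lambda(t) = rt$.

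For the first part I would appeal to the Hopf bifurcation theorem, which requires two facts in addition to the eigenvalue crossing already established. Transversality is immediate: differentiating the equilibrium relation $\sum_{n=1}^N (x_1^*)^n = r$ gives $dx_1^*/dr = \big(\sum_{n=1}^N n (x_1^*)^{n-1}\big)^{-1} > 0$, and since the discriminant $(1-2x_1^*)^2 - 4\epsilon\sum_n n(x_1^*)^{n-1}$ is strictly negative at $x_1^* = 1/2$ for every $\epsilon > 0$, it stays negative for $r$ near $r_c$, so there the eigenvalues are complex with real part $(2x_1^*-1)/(2\epsilon)$, whose $r$-derivative $\tfrac{1}{\epsilon}\big(\sum_n n(x_1^*)^{n-1}\big)^{-1}$ is positive at $r_c$; the eigenvalues cross the imaginary axis with nonzero speed. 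For the first Lyapunov coefficient I would translate the $r=r_c$ equilibrium to the origin via $u = x_1 - \tfrac12$, $v = w - \tfrac14$, which turns $(\ref{comoving1},\ref{comoving2})$ into $\epsilon\dot u = v + u^2$, $\dot v = -\sum_{n=1}^N (u+\tfrac12)^n + r_c$; Taylor-expanding the sum in $u$ places the strictly positive quadratic coefficient $b := \tfrac12\sum_{n=2}^N n(n-1)(1/2)^{n-2}$ into the $\dot v$ equation. After a linear rescaling that brings the linear part to a rotation of frequency $\omega = \sqrt{\big(\sum_n n(1/2)^{n-1}\big)/\epsilon}$, the standard first-Lyapunov-coefficient formula (as in, e.g., Guckenheimer and Holmes) applies: every cubic term that would enter it vanishes, and the quadratic cross-term contribution collapses to a negative multiple of $b$, so the coefficient is negative and the Hopf bifurcation is supercritical. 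The theorem then yields $\delta > 0$ such that for each $r \in (r_c, r_c+\delta)$ there is a unique periodic orbit $\Gamma_r$ of the co-moving system, encircling the equilibrium, shrinking to it as $r \to r_c^+$, and orbitally asymptotically stable with asymptotic phase.

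It remains to transport $\Gamma_r$ to the original system. Normalize $\lambda(0) = 0$. Then $(x_1(t),x_2(t),\lambda(t)) \mapsto (x_1(t), x_2(t)+\lambda(t))$ sends each solution of $(\ref{fastSlowEquation1})$--$(3)$ to a solution of $(\ref{comoving1},\ref{comoving2})$, and $(x_1(t),w(t)) \mapsto (x_1(t), w(t)-rt, rt)$ sends each solution of the co-moving system back to one of the original system; these are mutually inverse, and on initial data they act as the identity on the $(x_1,x_2)$-plane, so an original trajectory from a point converges to the image of $\Gamma_r$ exactly when the co-moving trajectory from the same point converges to $\Gamma_r$, that is, exactly when the point lies in the basin of $\Gamma_r$. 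That image consists of the curves $(x_1^*(t), w^*(t)-rt, rt)$ obtained from periodic solutions $(x_1^*(t),w^*(t))$ on $\Gamma_r$: the first coordinate is periodic, the second is periodic plus the linear drift $-rt$, and the third is $rt$, so in $(x_1,x_2,\lambda)$-space such a curve loops with bounded amplitude while drifting along the QSE line $(0,-\lambda,\lambda)$---it spirals. Since $\Gamma_r$ is asymptotically stable with asymptotic phase, every original trajectory whose initial $(x_1,x_2)$ lies in that basin approaches one of these spiraling curves, which is the corollary.

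The only step with any real content is the sign of the first Lyapunov coefficient: the computation is routine, but it must be done carefully, because the change to rotational normal-form coordinates distributes the $u^2$ nonlinearity over both components, and the wrong sign would put the emerging cycle on the wrong side of $r_c$. A more global alternative, indicated in the introduction, is that the co-moving system possesses a global attractor, so that once the equilibrium becomes a spiral source for $r > r_c$ the Poincar\'e--Bendixson theorem forces a periodic orbit which must be that attractor---this would also show the basin of $\Gamma_r$ is essentially the whole state space (using $N \geq 5$ odd)---but it requires a separate dissipativity estimate and does not by itself tie the cycle to the Hopf branch or give the shrinking behavior as $r \to r_c^+$, so the local computation is the cleaner route for the neighborhood statement claimed here.
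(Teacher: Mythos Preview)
Your argument is correct and in fact more complete than what the paper offers. The paper states the corollary without proof, treating it as an immediate consequence of the Hopf bifurcation in Theorem~1 together with the change of variables $w = x_2 + \lambda$; it does not verify transversality, does not compute the first Lyapunov coefficient, and in the subsequent discussion explicitly acknowledges that an analytic proof of a single attracting limit cycle is left open, relying instead on the numerics in Figure~\ref{fig01}.

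Your contribution beyond the paper is precisely the supercriticality check. The transversality computation is correct, and your sketch of the Lyapunov-coefficient calculation is also correct: after the translation $u = x_1 - \tfrac12$, $v = w - \tfrac14$ and the rescaling to rotational form, both nonlinearities depend only on the first coordinate, so the only cubic derivatives appearing in the Guckenheimer--Holmes formula ($f_{xxx}, f_{xyy}, g_{xxy}, g_{yyy}$) vanish, and the surviving quadratic cross-term $-f_{xx}g_{xx}/\omega$ is a negative multiple of $b$. This pins the stable cycle to the side $r > r_c$, which the paper simply assumes. Your transport step back to $(x_1,x_2,\lambda)$ via the mutually inverse maps is exactly the intended reading of the corollary and is handled cleanly. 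The closing remark about the Poincar\'e--Bendixson alternative mirrors the paper's own isolating-block comment and is a fair assessment of its limitations.
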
 

\begin{figure}
\begin{center}
\includegraphics[width=.65\textwidth]{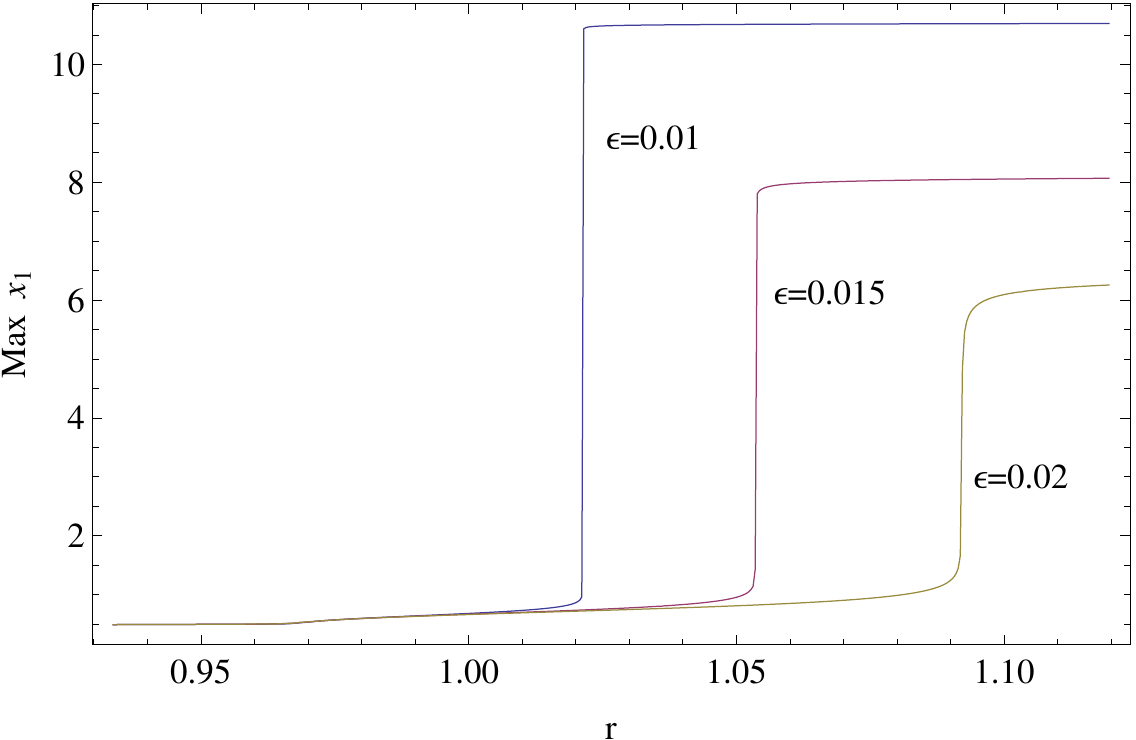}
\caption{The numerically determined maximum distance of the the limit cycle from the QSE as a function of the rate, $r$, plotted for $\epsilon = .015$ and $\epsilon = .02$. The rate for which there is a Hopf bifurcation is $r=0.96875$.}
\label{fig1a}
\end{center}
\end{figure}

As $r$ increases beyond $r_c$, the size of this limit cycle grows continuously. In theory it may be possible to prove analytically that there is a single attracting limit cycle. This problem could prove as challenging as a proof of the continuous progression of canard cycles in the van der Pol system. At the very least, however, it should be possible to find an isolating block with the vector field pointing inward on its boundary, implying at the very least the existence of an asymptotically stable invariant set of finite size. Rather than take on this tedious exercise, we show the progression of cycles numerically in figure 4.

These numerical simulations show the system has an attracting limit cycle, which grows quickly for $r > r_c$. With a periodic orbit growing continuously, it seems to be somewhat arbitrary how to answer when the system has tipped. The Hopf bifurcation proves there values of $r$ greater than $r_c$ such that the system cycles around the fold in a small periodic orbit. Values of $r$ below $r_c$ will not produce tipping, but values past that may produce a large excursion in the system, especially for very small $\epsilon$, or there only be relatively small periodic orbit around the fold. Ultimately, the value of $r_c$ certainly describles a bifurcation point, but the increase from tracking to a large excursion is not immediate at the value $r_c$. The maximum distance of the state to the QSE is a continuous function of $r$ even at $r_c$. We show this maximum distance for several values of $\epsilon$ in figure \ref{fig1a}. Not only is the maximum distance a continuous function of $r$, but it appears the greatest increase in maximum distance from the QSE occurs for a rate larger than the critical rate for trajectories surpassing the fold.

\begin{remark}
In the autonomous co-moving system, it is clear from Conley index theory of invariant sets for flows that changing the rate could not simply change a single global attractor to a repeller, without the creation of a new attracting invariant set. In the non-autonomous system with $\lambda = rt$, it is not immediately clear that this is true, but in this case it is. An index theory for isolated invariant sets in non-autonomous systems would be a useful in determining if or when such a non-autonomous bifurcation could occur.
\end{remark}

\section{Forced van der Pol Oscillator}

In this section, we will analyze a linearly forced van der Pol oscillator to demonstrate the progression of the spiraling trajectories in a well-known system. This system has been well studied; we are only transforming the equation slightly to demonstrate a rate-dependent bifurcation. The forced van der Pol oscillator as a 2nd order ODE is given by:

\[\frac{d^2x}{d\tau^2} -\mu(x^2-1)\frac{dx}{d\tau} + x = f(\tau) \]
We modify this form slightly, writing the equation as:
\[\frac{d^2x}{d\tau^2} -\mu(x^2-1)\frac{dx}{d\tau} + x = g(\tau) - \alpha\]
where $\alpha$ is a constant. Next, with the time transformation $t = -\tau/\mu$ we have
\[\frac{1}{\mu^2}\frac{d^2x}{dt^2} + (x^2-1)\frac{dx}{dt} + x = g(-\mu t) - \alpha\]
Setting $x_1 = x$ and using the Lienard transformation $x_2 = \frac{1}{\mu^2}\frac{dx_1}{dt} - (x_1 - \frac{x_1^3}{3}) + \int_0^t g(-\mu s) ds$, and setting $\lambda(t) = \int_0^t g(-\mu s) ds$ and $\epsilon = \frac{1}{\mu^2} << 1$, we end up with a fast/slow system of equations:

\begin{eqnarray}
\epsilon \frac{dx_1}{dt} &=& x_2 + (x_1 - \frac{x_1^3}{3}) + \lambda(t)\label{vanDerPol1}\\
\frac{dx_2}{dt} &=& -x_1 - \alpha
\end{eqnarray}

Van der Pol oscillators have been historically transformed so the forcing appears in the differential equation for the $x_2$ variable \cite{Guckenheimer2003, Szmolyan2000}. With this transformation the forcing $\lambda(t)$, is in the $x_1$ equation as in the previous fast/slow system. As before, we will assume $\lambda$ provides a linear forcing at a given rate $r$, so $\lambda(t) = rt$.

\begin{figure}
\begin{center}
\includegraphics[width=.48\textwidth]{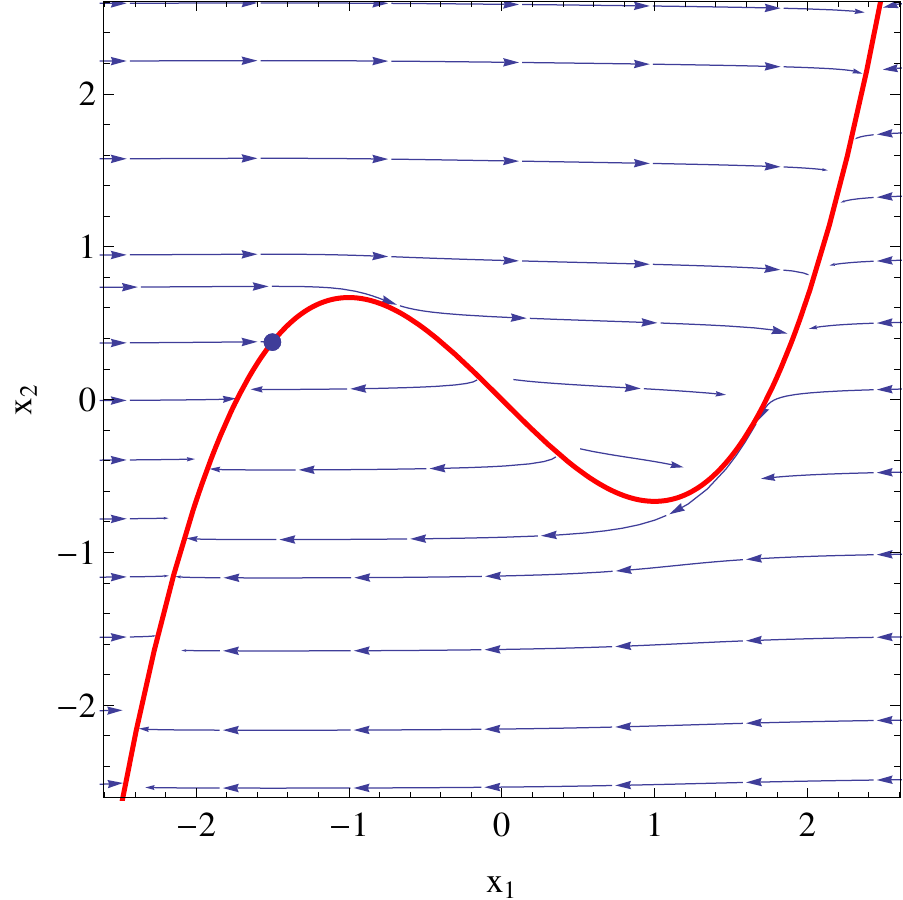} ~~~ \includegraphics[width=.48\textwidth]{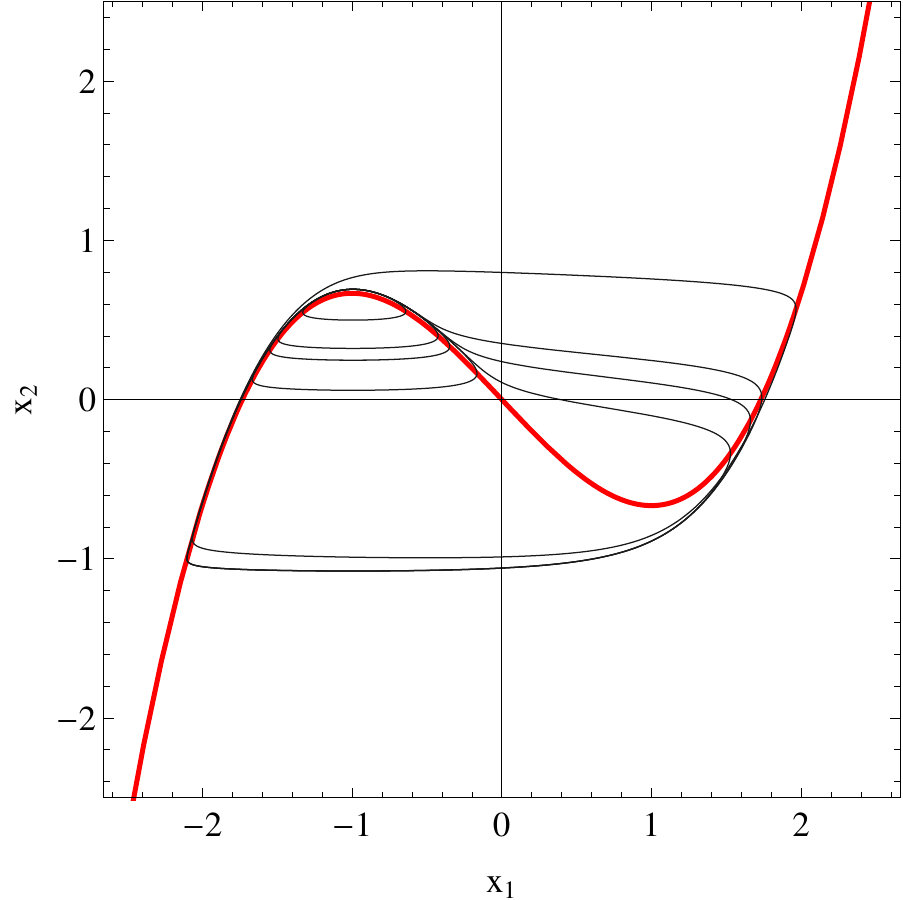}
\caption{Left: phase portrait of the system in (\ref{vanDerPol1}) for fixed $\lambda = 0$. The cubic slow manifold is in red, with the dot marking the stable equilibrium. Right: trajectories plotted illustrating the progression of the Hopf bifurcation and canard explosion in (\ref{vanDerPol-co-moving}).}
\label{fig2}
\end{center}
\end{figure}

In equation (\ref{vanDerPol1}), we will suppose that $\alpha > 1$ is a fixed constant, and for fixed $\lambda$, this system has a stable equilibrium at $(x_1^*, x_2^*) = (-\alpha, -\alpha + \frac{\alpha^3}{3} - \lambda)$. A phase portrait with $\lambda = 0$ is shown in figure \ref{fig2}. We create the co-moving system with the new variable, $w = x_2 + \lambda$:

\begin{eqnarray}
\epsilon \frac{dx_1}{dt} &=& w + (x_1 - \frac{x_1^3}{3})\label{vanDerPol-co-moving}\\
\frac{dw}{dt} &=& -x_1 - \alpha + r
\end{eqnarray}

\begin{figure}
\begin{center}
\includegraphics[width=.64\textwidth]{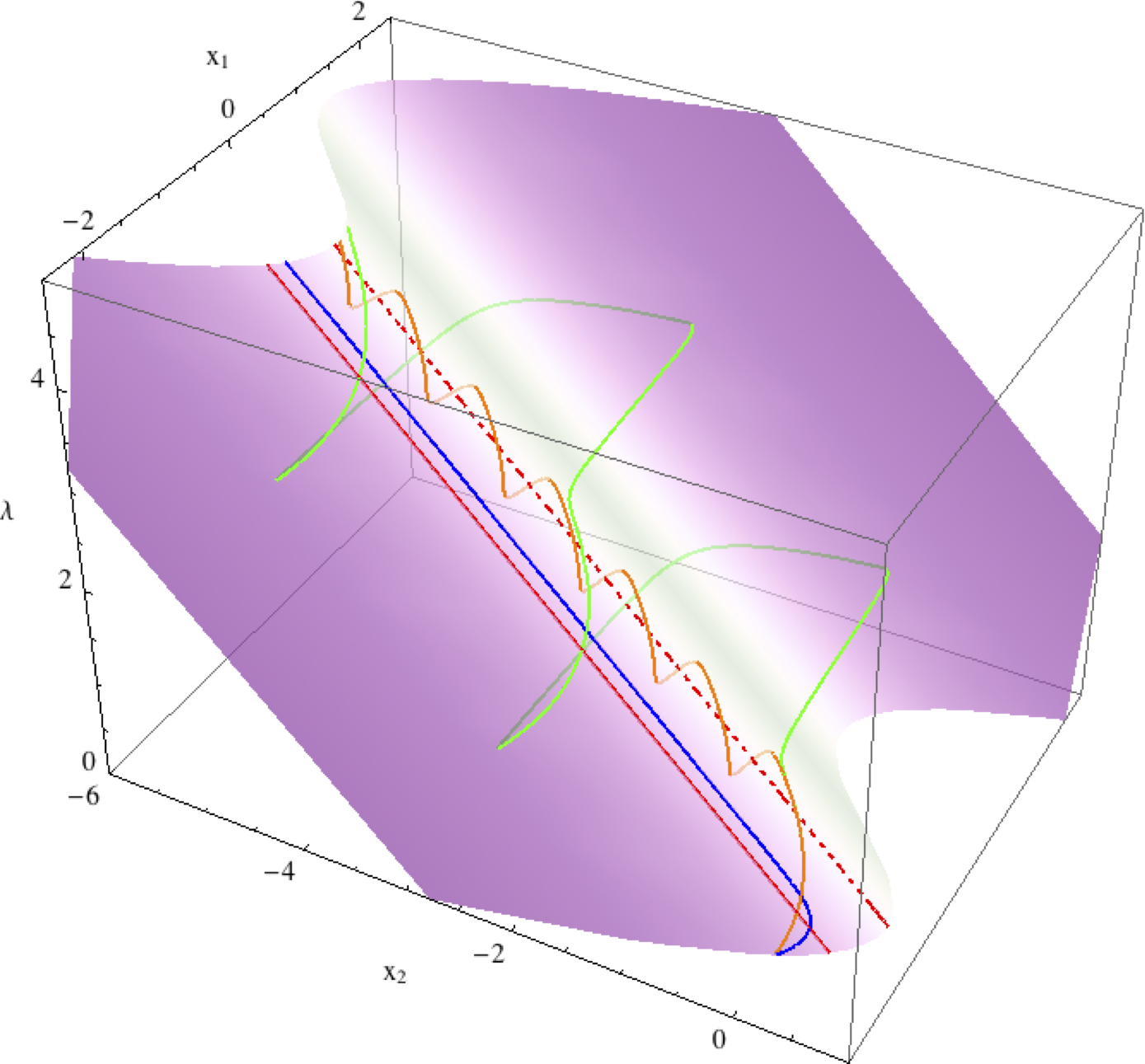}
\caption{The slow critical manifold of the forced van der Pol system is shown, overlaid with trajectories for various rates for the forcing parameter, with $\alpha = 1.5, \epsilon = .02$. Each trajectory has a different behavior: tracking the QSE closely ($r = 0.1)$, tracking in a small spiral (orange, $r = 0.5025$), and tracking as a large canard trajectory ($r = 0.503$).}
\label{fig4}
\end{center}
\end{figure}

This reduction results in the standard van der Pol system with $r$ as the bifurcation parameter \cite{Benoit1981, Diener1986}. In this system, a Hopf bifurcation will occur when $r = \alpha - 1$. The periodic orbit formed at the Hopf bifurcation will progress quickly through a canard explosion (see figure \ref{fig2}), a rate-dependent tipping point at $r = \alpha-1$. Like in the previous system, as the tipping occurs, the state will spiral around the equilibrium. If the rate is high enough in (\ref{vanDerPol-co-moving}), this spiral will be in the shape of the large canard trajectory. For even higher rates, at $r > \alpha + 1$, the co-moving system again has a stable equilibrium on the other branch of the critical manifold. The state will no longer spiral, but will follow at a much further distance from the QSE. 

\begin{figure}
\begin{center}
\includegraphics[width=.65\textwidth]{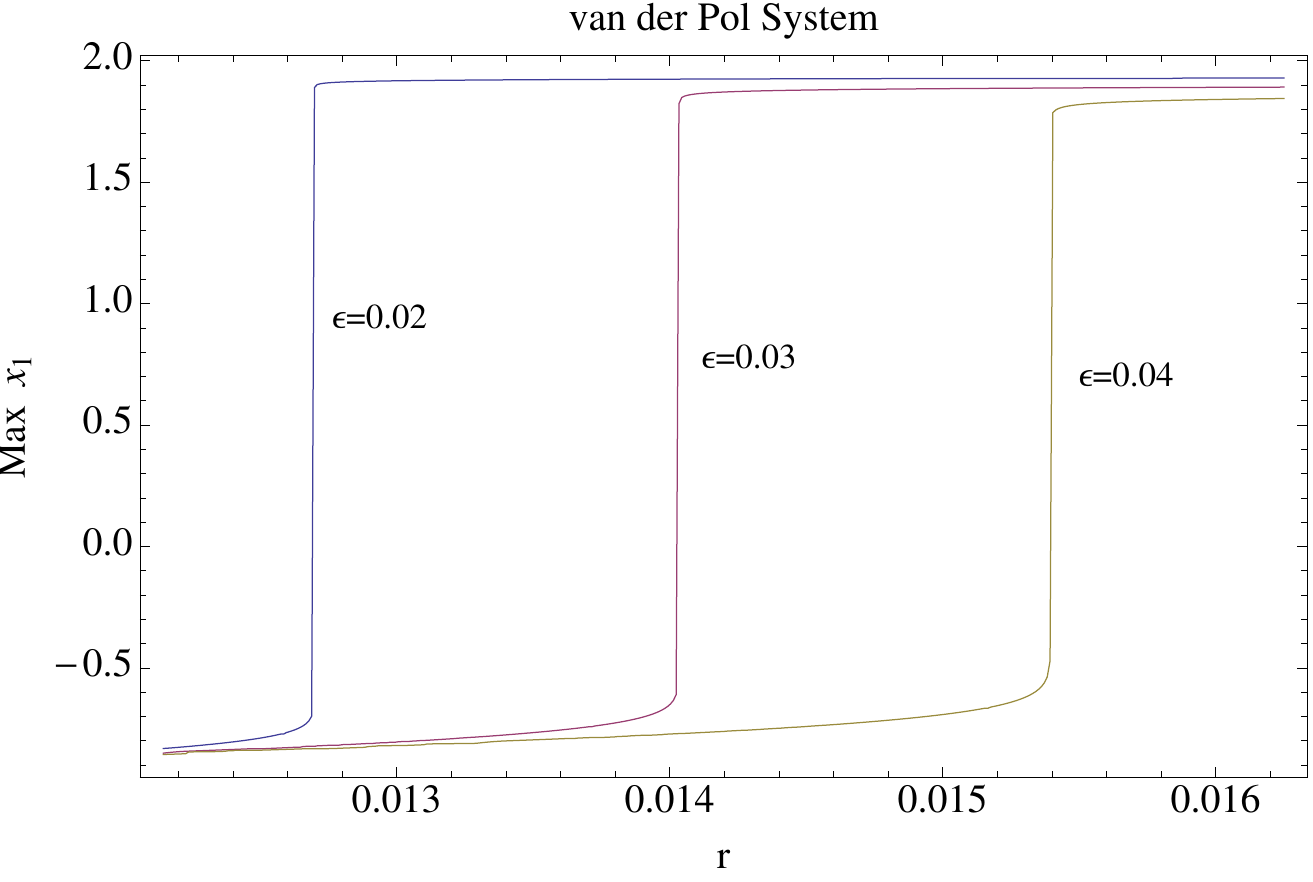}
\caption{The canard explosion in the van der Pol System. Plotted is the maximum distance of the the limit cycle as $r$ increases, plotted for $\epsilon = .02$, $\epsilon = .03$, and $\epsilon = .04$. For this figure we set $\alpha = 1.01$, so the Hopf bifurcation occurs at $r=0.01$. }
\label{fig9}
\end{center}
\end{figure}

The importance of this analysis is that the continuous progression of a single attracting limit cycle in the van der Pol system is well known, and we do not need to rely on numerical simulation to show the continuity with increasing $r$. Once again, at the point where trajectories slip over the fold, they continue to track the QSE in a spiral. We postulate that this behavior occurs in this form of tipping over a fold in general.

\section{Conclusions}

We presented cases of rate-dependent tipping with a quasi-stable equilibrium near the fold of a critical manifold in a fast/slow dynamical system. Each system is forced at a steady rate, and at a critical rate, GSP analysis shows that trajectories starting on the stable part of the critical manifold will pass over the fold, a tipping event. However, our analysis of the co-moving system shows that for $\epsilon > 0$, trajectories do not simply diverge away from the QSE. Rather, they start to track in a spiral. The example we presented was simple to reduce and analyze, but the behavior is likely to appear in more generic systems with equilibrium points near the fold of a critical manifold.

A key point of this paper is that at the critical rate value, these systems do not just proceed immediately from following the QSE to repelling far away from the QSE. The maximum distance of the limiting spiraling trajectory is a continuous function of the rate. 

In the autonomous system, if all trajectories were to diverge from the QSE at a critical rate, it would imply the disappearance of an attracting invariant set from an isolating block, an event which can be ruled out with basic index theory of invariant sets. The non-autonomous nature of rate-induced bifurcations makes us desire an index theory for attractors in non-autonomous systems, in cases when we cannot reduce the system to an autonomous one. An index theory for invariant sets at singular limits (as $\epsilon \to 0$) may be worth pursuing as well.

This type of rate-induced tipping is likely to show up in other fast/slow systems with folded slow manifolds. We examined linear forcing in these systems, but we suspect there is more to be discovered about the bifurcation structure in examples of rate-induced tipping with various kinds of forcing functions. This result may also have implications for the prediction of rate-induced tipping points in real-world systems.

\vspace{.5in}

\bibliography{FastSlowBibliography}
\bibliographystyle{plain}

\end{document}